\documentclass[a4paper, 12pt]{article}
\usepackage{tikz}
\usepackage{comment}
\usepackage{amsfonts}
\usepackage{amsthm}
\usepackage{amsmath}
\usepackage{geometry}
 \geometry{
 a4paper,
 total={170mm,257mm},
 left=20mm,
 top=20mm,
 }
\usepackage{wrapfig}

\theoremstyle{theorem}
\newtheorem{theorem}{Theorem}

\theoremstyle{definition}
\newtheorem{definition}{Definition}[section]

\theoremstyle{proposition}

\theoremstyle{lemma}
\newtheorem{lemma}{Lemma}[section]


\title {Algorithms for orthogonal partitioning into four parts}

\author {Alexey Fakhrutdinov and Oleg R. Musin}

\begin{document}

\date{}
\maketitle

\begin{abstract}  The famous pancake theorem states that for every finite set $X$ in the plane, there exist two orthogonal lines that divide $X$ into four equal parts. We propose an algorithm whose running time is linear in the number of points in $X$ and prove that this complexity is optimal. We also consider generalizations of the pancake theorem and show that orthogonal hyperplanes can be found in polynomial time.\end{abstract}  

\medskip

\noindent {\bf Keywords:} Ham--sandwich theorem, pancake theorem, mass partition, orthogonal partition.

\section{Introduction}

 The famous {\em ham--sandwich theorem} in $\mathbb R^3$ was proposed by Steinhaus and proved by Banach; for details see \cite{BZ}.  Stone \& Tukey \cite{ST} proved the $d$--dimensional version of the theorem in a more general setting involving measures. The  discrete version of the theorem states the following (see \cite[Theorem 3.1.2]{Mat}): 

\medskip 

\noindent  {\em Let   $X_1,\dots,X_d$ be finite sets in $\mathbb R^d$. Then there exists a hyperplane $h$ that simultaneously bisects $X_1,...,X_d$.}

\medskip 

Here ``$h$ {bisects} $X_i$'' means that each of the open half--spaces defined by $h$ contains at most  $\lfloor|X_i|/2\rfloor$ points of $X_i$. 

\medskip

Let $d=1$. Then $X=X_1$ is a set in $\mathbb R$, and $h$ is the median of $X$. (The {\em median} of a set $X$ of real numbers of size $n$ is defined as the middle element of $X$ when $n$ is odd and the average of the middle two elements when $n$ is even.) It is a well-known fact that the median can be found in $O(n)$ (linear) time.

\medskip

In two dimensions a ham--sandwich cut is a line $h$ that bisects $X_1$ and $X_2$ with $n$ points in total. Edelsbrunner and Waupotitsch \cite{EW} present an algorithm that can compute $h$ in time $O(n\log{n})$. Finally,  C. -Y. Lo,  J. Matoušek, and W. L. Steiger \cite{LoMS} (see also \cite{LoS}) proved that in 2D a ham--sandwich cut can be computed in $O(n)$ time. This paper also presents algorithms for finding ham--sandwich cuts in every dimension $d > 1$. 

\medskip

One of the most famous remaining open questions in topological combinatorics is the {\em hyperplane mass equipartition problem}, originating with Gr\"unbaum  \cite{Gru60} in 1960. He asked 

\medskip

\noindent ``{\em Is it possible to find  a partition of one finite measure by  $d$ affine hyperplanes in  $\mathbb R^d$  into $2^d$ equal parts}\,?''

\medskip

\noindent 
Avis \cite{Avis} showed that for $d\ge 5$ theorems on partitioning into $2^d$ equal pieces cannot exist, since there are examples of sets for which, for any partition by $d$ hyperplanes, fewer than $2^d$ parts are obtained. It is easy to see for $d=1,2$ and known for $d=3$ by Hadwiger \cite{Had66}. This result was later rediscovered in \cite{Yao}.  Note that the case $d=4$ remains an open problem.

\medskip

Consider the case when the hyperplanes are mutually orthogonal. For a plane, this fact is well known as the (second) {\em pancake theorem}, see \cite[Sec.7]{Shashkin}: 

\medskip

\noindent 
{\em Given one finite-area, two-dimensional pancake. Then there exist two perpendicular straight lines that cut the area of the pancake into four equal pieces.}

The algorithm previously known for this problem is described in \cite{RoySt} and has $O(n \log n)$ time complexity.
In this paper, we prove the following result.

\begin{theorem}
	Given a set of points $P$  in $\mathbb{R}^2$, $|P| = n$, an orthogonal cut partitioning the set into four equal parts can be computed in optimal linear time in $n$, i.e., in $\Theta(n)$ time.
\end{theorem}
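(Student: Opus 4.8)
The plan is to split the statement into an existence argument (the pancake theorem, recast as a discrete intermediate-value problem in one parameter) and an algorithmic argument that turns the resulting root-finding into a prune-and-search running in linear time. I would parametrize an orthogonal cut by the angle $\theta\in[0,\pi/2)$ of its two normal directions $\theta$ and $\theta+\pi/2$. If all four quadrants contain $n/4$ points, then the right half $Q_{TR}+Q_{BR}=n/2$ and the top half $Q_{TR}+Q_{TL}=n/2$, so each of the two lines is forced to be a median of $P$ in its normal direction; conversely, fixing $\theta$ determines both median lines $\ell_1(\theta),\ell_2(\theta)$ (up to degeneracy) and hence the four quadrant counts. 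Writing $g(\theta)=Q_{TR}(\theta)-n/4$, the two median conditions force $Q_{TR}=Q_{BL}$, $Q_{TL}=Q_{BR}$ and $Q_{TR}+Q_{TL}=n/2$, so that $Q_{TL}(\theta)=n/4-g(\theta)$. Since at $\theta+\pi/2$ the two lines swap roles and the top-right quadrant becomes the old top-left one, one gets $g(\theta+\pi/2)=-g(\theta)$; hence $g$ changes sign on $[0,\pi/2]$ and a root exists. The algorithmic task is thus to locate a sign change of the integer-valued, piecewise-constant function $g$.

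First I would show that $g(\theta)$ can be evaluated at any query angle in $O(n)$ time: rotate coordinates, compute the two medians with the linear-time selection algorithm, and count one quadrant. The combinatorial type of the cut changes only at the $\Theta(n^2)$ \emph{critical angles} perpendicular to the segments joining pairs of points, where the order of projections (and hence the median position or a quadrant membership) flips. Combining the $O(n)$ evaluation with a search over these critical angles already suffices for an $O(n\log n)$-type algorithm such as the one of \cite{RoySt}; the point of the theorem is to remove the logarithmic factor.

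To reach $\Theta(n)$ I would use prune-and-search, in the spirit of Megiddo and of the linear-time ham--sandwich algorithm of \cite{LoMS}. Maintain an angular interval $I$ guaranteed to contain a sign change of $g$, together with a set of active points. In each round, form candidate critical angles from pairs of active points, use linear-time selection to choose the median of them as the query angle, evaluate $g$ there, and recurse on the subinterval of $I$ that still carries a sign change. The crucial step is pruning: for every pair of active points whose critical angle falls outside the retained subinterval, the relative order of their projections — and therefore their contribution to both medians and to the quadrant count — is frozen for the remainder of the search, so a constant fraction of points can be eliminated and their fixed contribution folded into $O(1)$ running counters. As the active set then shrinks geometrically, the total cost is $\sum_i O(n/2^i)=O(n)$.

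The main obstacle is exactly this pruning guarantee: the query angle must be chosen so that a constant fraction of the pair-critical angles is discarded, while one simultaneously proves that the discarded points' roles in both median lines and in the quadrant count are genuinely invariant over the surviving interval, so they may be summarized by aggregate numbers without breaking the discrete intermediate-value search. Degeneracies — even $n$, ties among projections, and points lying on a candidate line — must be handled by a consistent tie-breaking rule so that the sign-change search terminates at a genuine equipartition. Finally, for optimality I would prove the matching $\Omega(n)$ lower bound by an adversary argument: an algorithm that leaves some input point uninspected cannot know on which side of either cut line that point lies, and the adversary may then place it so as to falsify any reported partition; hence $\Omega(n)$ inspections are necessary and the complexity is $\Theta(n)$.
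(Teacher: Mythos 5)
Your existence argument and your lower bound are sound (the paper proves the $\Omega(n)$ bound differently, by reducing median-finding to orthogonal partitioning via the points $(a_i, e^{a_i})$ on a convex curve, but your adversary argument serves the same purpose). The genuine gap is in the upper bound, at exactly the step you yourself flag as ``the main obstacle'': the pruning guarantee is never established, and the mechanism you propose cannot establish it. First, the candidate set is too large: the critical angles come from pairs of active points, so with $k$ active points there are $\Theta(k^2)$ candidates, and selecting their median already costs $\Theta(k^2)$; the very first round is then $\Theta(n^2)$, not $O(n)$. Second, even if you restrict to $k/2$ disjoint pairs in Megiddo's style, resolving the projection order of one pair does not let you eliminate either of its points: a point's role in the two median lines and in the quadrant count depends on its order relative to \emph{all} other points throughout the surviving angular interval, not just relative to its partner, and there is no domination structure (as in prune-and-search for linear programming) making one point of a pair redundant. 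So neither the per-round linear time nor the geometric shrinkage of the active set follows from what you wrote.

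The idea that closes this gap --- and is the core of the paper's proof --- lives in the dual. Each point $(a,b)$ is mapped to the line $y = ax - b$; the two orthogonal cuts become two points on the median level $\mu_H$ of the arrangement at abscissae $a$ and $-1/a$, and the quadrant condition becomes: exactly $\left\lfloor n/4 \right\rfloor$ lines pass above both points. The paper then prunes with the machinery of Lo--Matou\v{s}ek--Steiger: subdivide the localization intervals $T$ and $S = f(T)$ so that each subinterval contains at most $\alpha \binom{|G|}{2}$ arrangement vertices; pick the subinterval on whose boundary the counting function crosses the target value $m$; build trapezoids around the levels $L_p$ and $L_q$ inside the corresponding vertical strips, which for suitable $\varepsilon$ and $\alpha$ are guaranteed to contain those levels and to be crossed by at most half of the lines; finally discard every line missing both trapezoids, updating $p$, $q$, and $m$ by the counts $k_{\pm,\pm}$ of discarded lines passing above/below each trapezoid. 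It is precisely this trapezoid/level localization that turns your informal ``frozen contribution, fold into counters'' into a rigorous step: a discarded line has a known position relative to both levels over the whole surviving interval, so the indices and the target can be corrected exactly. Without this device (or an equivalent one), your round structure neither runs in linear time nor provably discards a constant fraction of points, so the proposal as written does not yield $\Theta(n)$.
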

The proof of this theorem is given in Section 2.

\medskip

Recently we have found two generalizations of the pancake and ham--sandwich theorems for all dimensions \cite{Mus25}, which follows from a generalization of the results of Section 5 of \cite{Mus12} on the Borsuk–Ulam type theorem for Stiefel manifolds.

\medskip 

\noindent {\bf Theorem A.} {\em Let $X$ be a finite set in  $\mathbb R^d$. Then there exist $d$ mutually orthogonal hyperplanes such that every pair of these hyperplanes divides $X$ into four equal parts. } 

\medskip 

\noindent {\bf Theorem B.} {\em Let $X_1, \dots ,X_m$ be finite sets in  $\mathbb R^d$, $d={\delta(m)}$, where 
$$
\delta(m):=m+2^{\lfloor \log_2 m \rfloor}.
$$
Then in  $\mathbb R^{d}$ there exist two mutually orthogonal hyperplanes that simultaneously divide all $X_i$ into four equal parts.} 

\begin{theorem}
In Theorems A and B, orthogonal hyperplanes can be found in polynomial time in the total number of points.
\end{theorem}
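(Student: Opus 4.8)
The plan is to recast both statements as the problem of locating a point in a nonempty semialgebraic set of bounded description, and then to apply an algorithm from effective real algebraic geometry whose running time is polynomial in the number of defining polynomials once the number of variables is fixed. Throughout, the dimension $d$ (equal to $\delta(m)$ in the case of Theorem B) is treated as a constant, so that the ambient configuration space has constant dimension and the only quantity growing without bound is $n$, the total number of points.

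\textbf{Parametrization.} For Theorem A a configuration is an orthonormal frame $U=(u_1,\dots,u_d)$ together with offsets $c_1,\dots,c_d$; the $i$-th hyperplane is $\{x:\langle x,u_i\rangle=c_i\}$, and mutual orthogonality of the hyperplanes is exactly orthonormality of the frame, i.e. the quadratic constraints $\|u_i\|=1$ and $\langle u_i,u_j\rangle=0$ for $i\neq j$. For Theorem B a configuration is a $2$-frame $(u_1,u_2)\in V_2(\mathbb R^d)$ together with two offsets. In both cases the number of real parameters, namely $\binom{d}{2}+d$ respectively $2d-1$, is a constant. For each point $x$ and each index $i$ the sign of $\langle x,u_i\rangle-c_i$ determines the combinatorial position of $x$, so the region cardinalities relevant to the partition conditions are piecewise-constant functions of the parameters, locally constant away from the incidence surfaces $\langle x,u_i\rangle=c_i$.

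These incidence surfaces are $O(nd)=O(n)$ algebraic hypersurfaces of degree at most two in the parameter space, and together with the frame constraints they induce an arrangement whose cells are the maximal regions on which every relevant cardinality is constant. By the Milnor--Thom bounds, an arrangement of $N$ bounded-degree hypersurfaces in a space of constant dimension $D$ has $N^{O(D)}=n^{O(1)}$ cells of all dimensions. On each cell the four-part equipartition conditions reduce to a fixed system of polynomial equalities and inequalities: for Theorem A one requires, for every pair $(i,j)$, that the four sign classes of $(\langle x,u_i\rangle-c_i,\langle x,u_j\rangle-c_j)$ be as balanced as the discrete convention of the introduction (extended to four parts) permits, while for Theorem B one requires the same of the single pair $(u_1,u_2)$ applied to each of $X_1,\dots,X_m$. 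The set of configurations realizing a genuine orthogonal four-equipartition is therefore a semialgebraic subset cut out by $O(n)$ polynomials of bounded degree in a constant number of variables, and Theorems A and B guarantee that this set is nonempty.

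\textbf{Algorithm and complexity.} I would compute a finite set of sample points meeting every connected component of every realizable sign cell of this arrangement, using a point-per-component procedure from algorithmic real algebraic geometry (for example the sampling algorithms of Basu--Pollack--Roy). For a fixed number $D$ of variables such procedures run in time polynomial in the number of input polynomials and their degrees, hence in time $n^{O(1)}$. For each sample point one reads off the induced partition in $O(n)$ time and tests the equipartition condition; the existence theorems ensure that at least one sample is valid, and it is returned together with its frame and offsets. I expect the main obstacle to be bookkeeping at the boundary rather than raw complexity: because a discrete equipartition is an equality condition, the desired configurations typically lie on lower-dimensional faces of the arrangement (points of the input lying exactly on the cutting hyperplanes), so the sampling must genuinely visit faces of all dimensions and the parity and floor conventions must be tracked so that a combinatorially optimal wall configuration is certified as an honest solution. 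A secondary point to verify is that, for Theorem A, the pairwise conditions are coupled in the correct way automatically, since all pairs share the single frame variable $U$, so that once $U$ and the offsets are fixed no separate consistency argument across pairs is needed.
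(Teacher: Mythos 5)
Your proposal is correct, but it takes a genuinely different route from the paper. The paper's argument is elementary enumeration anchored at the input points: for Theorem A the first hyperplane is taken as the span of $d$ points of $X$, the second as the span of $d-1$ points subject to orthogonality, and so on, giving $O\left(n^{d+(d-1)+\cdots+2}\right)$ candidate configurations, each checked in linear time, for a total of $O\left(n^{d(d+1)/2}\right)$; for Theorem B the same idea applied to the smallest set $X_1$ gives $O\left(n\,n_1^{2d-1}\right) \le O\left(n^{2d}\right)$. You instead parametrize configurations by a constant-dimensional frame-and-offset space, observe that the partition counts depend only on the sign vector of $O(n)$ bounded-degree polynomials, and invoke Basu--Pollack--Roy sign-condition sampling, with the existence theorems guaranteeing that some sampled configuration is valid. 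Both arguments prove polynomiality for fixed $d$, so both establish the theorem. The trade-off is real, though: the paper's approach is self-contained and yields explicit, smaller exponents, but it silently relies on the claim that if a valid orthogonal configuration exists then one exists whose hyperplanes are incident to the prescribed numbers of input points --- a perturbation argument that is standard for independent ham--sandwich cuts but more delicate here, since under the orthogonality constraint the hyperplanes cannot be rotated independently, and the paper does not spell it out. Your approach costs heavy machinery and a worse, unspecified exponent (roughly $n^{O(d^2)}$ for Theorem A versus the paper's $n^{d(d+1)/2}$), but it is immune to that issue: enumerating all realizable sign conditions, including the degenerate ones you correctly flag where input points lie exactly on the cuts, covers every configuration, so correctness follows from the existence theorems alone with no incidence claim needed.
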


\section{Proof of Theorem 1}
\subsection{Dual Transformation}
\begin{definition}[Function N]
Given two orthogonal cuts of $P$, whose directions are $\varphi$ and $\varphi + {\pi}/{2}$ for some $\varphi$, denote by $N\left(\varphi\right)$ the number of points in the first quadrant, which is the quadrant above both lines when $\varphi \in \left(0, \pi/2\right)$. Using this definition, one may express the desired cut as having  $N(\varphi) = \left \lfloor {n/4} \right \rfloor$. 
\end{definition}

Existence of such a cut can be proved in an elementary way: take arbitrary $\varphi$ and assume, without loss of generality, that $N\left(\varphi\right) > \left \lfloor {n}/{4} \right \rfloor$. Then the number of points in the second quadrant is less than $\left \lfloor {n/4} \right \rfloor$ so $N \left( \varphi + \cfrac{\pi}{2} \right) < \left \lfloor \cfrac{n}{4} \right \rfloor$. As during the rotation of $\varphi$, $N\left(\varphi\right)$ changes only by one when passing through some point, the existence of $\varphi^*$ with $N\left(\varphi^*\right) = \left \lfloor {n}/{4} \right \rfloor$ is guaranteed by the intermediate value theorem. Our algorithm in fact performs a search based on this simple idea.
One may use the slope $a$ of the first cutting line instead of $\varphi$ as a parameter of such a partition. We abuse notation by defining $N\left(a\right)$ to be equal to $N\left(\arctan(a)\right)$.









\begin{definition}[Dual transformation]
Define $D$ -- duality between points in the plane and lines in the plane via the following formula:

$$D\left(a, b\right) = \{y = ax - b \}$$

\end{definition}

This duality preserves order in the following sense: for a given point $p$ and a line $l$ $p$ is above(below) $l$ if and only if $D\left(l\right)$ above(below) $D\left(p\right)$. See \cite{Edels87} for more detailed information.

\begin{definition}[$p$-levels of line arrangement]

Given a line arrangement $H$ on a plane, $|H| = n$, and an integer $1\leq p \leq n$, the $p$-level of $H$, called $L_p\left(H\right)$, is the graph of the function $f_p$ which at $x$ takes the $p-th$ smallest value among $\{h(x)| h \in H \}$. The median of the arrangement is $L_{\left\lceil \frac{n}{2} \right\rceil}\left(H\right)$ and is denoted by $\mu_H$.
See \cite{Edels87} for details and properties.
\end{definition}



\subsection{Optimal algorithm for partitioning  into four equal parts.}
\begin{lemma}
	Any algorithm for orthogonal partitioning has $\Omega \left(n\right)$ time complexity.
\end{lemma}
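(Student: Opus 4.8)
The plan is to prove this lower bound by an adversary argument that reduces the question of running time to the number of input points the algorithm is forced to read. In any reasonable model of computation (real-RAM or pointer machine) reading or referencing a single point costs $\Omega(1)$ time, so it suffices to show that every correct algorithm must inspect $\Omega(n)$ of the $n$ points in the worst case. I would in fact establish the stronger statement that there is an input on which any correct algorithm must inspect \emph{all} $n$ points.

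The core of the argument is a per-point criticality observation. Fix a configuration $I^*$ of $n$ points and an index $i$. If the algorithm, when run on $I^*$, never inspects the $i$-th point, then it produces the identical output on $I^*$ and on any input $I^*_i$ obtained by moving only the $i$-th point, since the two inputs agree on every coordinate the algorithm ever reads. Writing $V(I)$ for the set of valid orthogonal four-cuts of $I$, correctness on both inputs forces $V(I^*)\cap V(I^*_i)\neq\emptyset$. Consequently, if I can exhibit a single configuration $I^*$ together with, for each index $i$, a repositioning $I^*_i$ of the $i$-th point alone such that $V(I^*)\cap V(I^*_i)=\emptyset$, then every index must be inspected and the $\Omega(n)$ bound follows. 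This formulation also disposes of adaptivity: the index the algorithm skips may depend on the run, but the construction makes every index simultaneously critical.

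For the construction I would place the $n$ points in general position so that the equipartition is \emph{rigid}: the step function $N(\varphi)$ attains the value $\lfloor n/4\rfloor$ with the two orthogonal medians balancing all four quadrants at a single isolated direction $\varphi^*$, so that $V(I^*)=\{c_0\}$ is a singleton consisting of one cut $c_0$. The points are distributed so that each open quadrant of $c_0$ contains exactly $\lfloor n/4\rfloor$ of them and none lies on either cutting line. For each index $i$ I then take $I^*_i$ to be the configuration in which the $i$-th point is moved across one of the two lines of $c_0$ into the adjacent quadrant; this changes two quadrant counts to $\lfloor n/4\rfloor+1$ and $\lfloor n/4\rfloor-1$, so $c_0$ no longer equipartitions $I^*_i$, i.e. $c_0\notin V(I^*_i)$. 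Since $V(I^*)=\{c_0\}$, this gives $V(I^*)\cap V(I^*_i)=\emptyset$, exactly as required.

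The hard part is guaranteeing rigidity: for a generic point set the condition $N(\varphi)=\lfloor n/4\rfloor$ typically holds on a whole interval of directions rather than at an isolated $\varphi^*$, and if $V(I^*)$ were such an interval I would have to invalidate every cut in it with a single repositioning. I would resolve this by choosing the coordinates so that $N(\varphi)$ passes through the target value $\lfloor n/4\rfloor$ at one direction and is strictly off it on both sides, pinning $V(I^*)$ down to a single cut, with a small perturbation argument making this robust. Once rigidity is secured, combining the criticality observation with the construction shows that every correct algorithm inspects all $n$ points and hence runs in $\Omega(n)$ time; together with the matching upper bound of Theorem 1 this yields the claimed $\Theta(n)$ complexity.
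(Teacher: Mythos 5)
Your high-level framework --- the per-point criticality/adversary argument, reducing the time bound to the number of points any correct algorithm must inspect --- is sound, and it is genuinely different from the paper's proof (which reduces median-finding to orthogonal partitioning). But your construction contains a contradiction that breaks the argument. You require a configuration $I^*$ in which (a) no point lies on either line of $c_0$, (b) each open quadrant of $c_0$ contains exactly $\lfloor n/4\rfloor$ points, and (c) $V(I^*)=\{c_0\}$ is a singleton. Conditions (a) and (b) are incompatible with (c): if no point lies on either line, the point set has positive distance to the two lines, so translating either line slightly, or rotating the pair about its intersection point by a small angle, crosses no point and produces another cut with exactly the same quadrant counts. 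Hence $V(I^*)$ contains a whole neighborhood of $c_0$ in the parameter space of cuts and is never a singleton; equivalently, $N(\varphi)$ is locally constant at any direction whose cut has no incident points, so it cannot attain $\lfloor n/4\rfloor$ at an isolated direction there. Your proposed repair (choosing coordinates so that $N$ passes through the target value at exactly one direction) forces that direction to be a critical one, i.e.\ one at which points lie \emph{on} the cutting lines, contradicting (a); and at such degenerate directions $N$ can jump past the target value, so no ``small perturbation argument'' restores the picture. The failure is not cosmetic: once $V(I^*)$ is an open set, moving point $i$ just across one line of $c_0$ leaves valid every cut $c'\in V(I^*)$ whose corresponding line passes on the far side of both the old and the new position of the point, so $V(I^*)\cap V(I^*_i)\neq\emptyset$ and index $i$ is not critical after all.

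The gap could in principle be closed --- for instance by arranging that all valid cuts pass through a common center and moving point $i$ to its reflection through that center, so that \emph{every} valid cut separates the old position from the new one --- but that requires a genuine argument about the global structure of $V(I^*)$ which your sketch does not supply. The paper sidesteps the difficulty entirely: it embeds an arbitrary array as the points $(a_i, e^{a_i})$ on the monotone convex curve $(t,e^t)$, observes that the negative-slope line of any valid orthogonal cut meets this curve exactly once and, because that line bisects the set, its crossing identifies the median of the $a_i$. The known $\Omega(n)$ lower bound for median-finding (itself an adversary bound in which every entry must be read) then transfers to orthogonal partitioning, so the ``rigidity'' is inherited from the one-dimensional problem rather than constructed in the plane.
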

Here, the Big-Omega notation is used to describe the {\em asymptotic lower bound} of an algorithm. 
\begin{proof}
	We prove this by contradiction by showing that an algorithm solving the orthogonal partitioning problem can be used to find the median in an unsorted array. Namely, for a given array $a_i$, $1 \le i \le n$, consider the set $P \in \mathbb{R}^2$ consisting of points $\left(a_i, e^{a_i}\right)$ and apply partitioning algorithm. Since the curve $\left(t, e^t\right)$ passes through at most three quadrants of the partition, one line intersects this curve at exactly one point, which corresponds to the median of the array. Since the optimal algorithm for finding the median has linear time complexity, any orthogonal partitioning algorithm has a linear lower bound. 
\end{proof}

Without loss of generality, consider the case $n \equiv 1 \mod 4$, as illustrated in the picture below.
When passing to dual problem via $D$, the orthogonality of the two cuts corresponding to the points $\left(a, b\right)$ and $\left(c, d\right)$ is expressed as $c = f\left(a\right)$ where $f\left(x\right) = - \cfrac{1}{x}$. 

\begin{figure}[h]
\centering
\includegraphics[scale=0.4]{./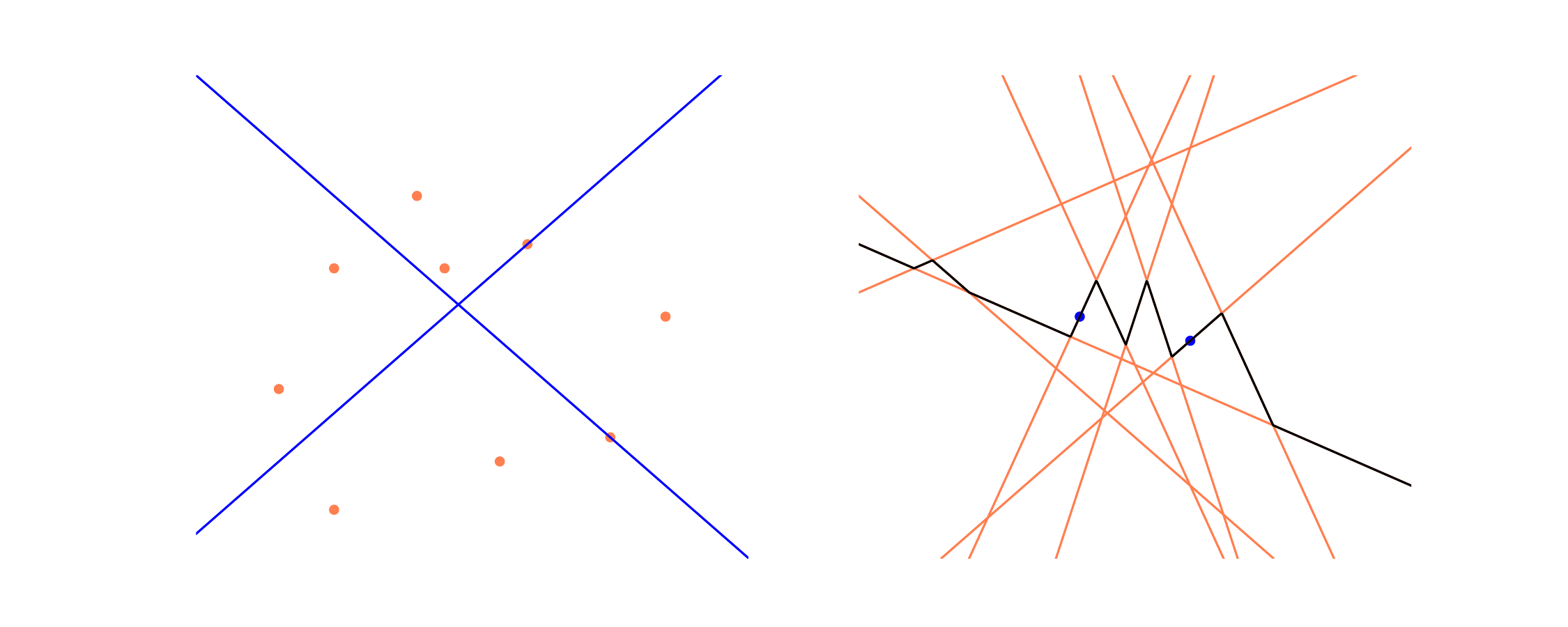}
\caption{Partitioning of the set $P$ of 9 points. The black polygonal line corresponds to the median of configuration $D\left(P\right)$. }
\end{figure}

We solve the problem in the following form: given a line arrangement $H = D(P)$, find two points on its median, $\left(a, \mu_H\left(a\right)\right)$ and $\left(b, \mu_H\left(b\right)\right)$, with $b = f\left(a\right)$, such that the number of lines above both points is $\left \lfloor {n/4} \right \rfloor$.

The algorithm discards a constant fraction of lines after each phase, until it becomes possible to solve the problem by brute force with a constant number of lines.Therefore, its time complexity is determined by that of the first phase.

At the beginning of each phase of the algorithm, we have the following data:


\begin{itemize}
 \item $G$ -- the set of lines remaining
 \item $T, \: S = f\left(T\right)$ -- intervals in which the solution is localized
 \item $p, \: q$ -- integers such that corresponding levels $L_p\left(G\right)$ and $L_q\left(G\right)$ coincide with the initial median of the line arrangement $\mu_H$ on $T$ and $S$, respectively.
 \item $m$ -- current target value for function $N\left(a\right)$.
\end{itemize}


The initial values for these parameters are: $$G = H, \; T = \left(0, +\infty\right), \; S = \left(-\infty, 0\right), \; p = q = (n + 1)/{2}, \; m = \left \lfloor {n}/{4} \right \rfloor. $$

The main theorem follows if we prove

		

\begin{lemma}
	Given the data as described above, it is possible to compute in time $O(|G|)$: 
	\begin{itemize}
		\item $G^{'} \subset G$  with $\left|G^{'}\right| \le \cfrac{1}{2} \left|G\right|$
		
		\item $T^{'} \subset T$, $S^{'} \subset S$ with $S^{'} = f\left(T^{'}\right)$
		\item $p^{'}$ and $q^{'}$ such that $L_{p^{'}}$ and $L_{q^{'}}$ coincide with $L_p$ and $L_q$ on $T^{'}$ and $S^{'}$, respectively
		\item $m^{'}$ -- the new target value for $N\left(a\right)$
	\end{itemize}	
\end{lemma}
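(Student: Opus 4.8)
The plan is to design one phase of a prune-and-search recursion, modeled on the Lo--Matou\v sek--Steiger linear-time ham--sandwich machinery, and show each step runs in $O(|G|)$ while discarding at least half the lines. First I would sample a constant number of points $a_1 < \dots < a_k$ in the interval $T$, which via $f(x)=-1/x$ induce corresponding points in $S$, and use these to carve $T$ into a bounded number of subintervals. The key quantity is $N(a)$, the number of lines lying above both the point $(a,\mu_H(a))$ on the level $L_p(G)$ over $T$ and the orthogonally-paired point $(f(a),\mu_H(f(a)))$ on $L_q(G)$ over $S$. Because $N$ changes by at most a constant when $a$ crosses a vertex of either level (a line entering or leaving a quadrant), and because the existence argument from the Definition~1 discussion guarantees a sign change of $N(a)-m$ across $T$, I can evaluate $N$ at the sample abscissae in $O(|G|)$ time each (counting lines above a query point is a linear scan) and, by the intermediate value principle, locate a subinterval $T'$ still containing a solution.

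Next I would exploit the combinatorial structure of the levels to prune lines. The standard trick is that, having restricted to a subinterval $T'=(\alpha,\beta)$ on which we want $L_p(G)$ to equal $\mu_H$, any line whose graph stays strictly below $L_p$ throughout $T'$ (and symmetrically strictly above, or outside the relevant band over $S'$) cannot influence which line realizes the level on $T'$ and hence can be removed while merely shifting the level index. Concretely, for a constant fraction of the lines I can certify, by comparing their values at the endpoints $\alpha$ and $\beta$ (and at $f(\alpha),f(\beta)$), that they lie wholly above or wholly below the targeted levels on both $T'$ and $S'=f(T')$; each such line is deleted and $p,q,m$ are decremented accordingly to obtain $p',q',m'$ with $L_{p'}(G')$ and $L_{q'}(G')$ agreeing with $L_p,L_q$ — and thus with $\mu_H$ — on $T',S'$. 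A counting argument over the $O(1)$ sample slabs shows at least $\tfrac12|G|$ lines can be so classified, giving $|G'|\le \tfrac12|G|$.

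To make the pruning step honest I would pair the lines that are ambiguous on $T'$ (those crossing the band) into $\Theta(|G|)$ pairs and use the median-finding subroutine to choose a good trial abscissa, so that whichever way the comparison resolves, a constant fraction of one member of each pair is eliminated; this is exactly the Lo--Matou\v sek--Steiger device and it is what forces the $1/2$ discard ratio rather than a weaker one. The bookkeeping for $m'$ follows from noting that deleting a line known to pass above both query points decreases the target first-quadrant count by one, while deleting one below leaves $m$ unchanged; updating $p',q'$ is the same accounting applied to the level indices.

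The hard part, I expect, is the interaction of the two levels through the nonlinear constraint $b=f(a)=-1/a$: unlike the linear ham--sandwich setting, the abscissae on $S$ are a nonlinear reparametrization of those on $T$, so a line that is cheap to classify over $T'$ may straddle the band over $S'=f(T')$. I would handle this by insisting that every trial subinterval $T'$ be chosen small enough that $f$ is essentially monotone and bounded-distortion on it — which it is, since $f$ is smooth and monotone on each of $(0,\infty)$ and $(-\infty,0)$ — so that the slab decomposition of $T'$ transports to a slab decomposition of $S'$ with only a constant blow-up in the number of pieces. Verifying that a constant fraction of lines survive this \emph{double} certification, on both $T'$ and $S'$ simultaneously, is the crux; everything else is the routine $O(|G|)$ median and counting bookkeeping, and iterating the lemma over $O(\log|G|)$ phases yields the claimed $\Theta(n)$ bound for Theorem~1.
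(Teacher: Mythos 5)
Your overall outline---locate a sign-change subinterval for $N$ by evaluating it at $O(1)$ abscissae, then prune lines that provably stay above or below the relevant levels over both slabs, updating $p,q,m$ by the accounting you describe---is the same as the paper's, and your bookkeeping for $p',q',m'$ is correct. But the core certification step has a genuine gap. You claim that comparing a line's values at the endpoints $\alpha,\beta$ (and at $f(\alpha),f(\beta)$) against the targeted levels certifies that it lies wholly above or wholly below them on $T'$ and $S'$. That is false as stated: $L_p$ is a polygonal curve, not a segment, so a line lying above $L_p$ at both walls of a slab can still dip below it inside the slab. The paper closes exactly this hole with the trapezoid construction from Lo--Matou\v{s}ek--Steiger: the slabs are chosen (in linear time) so that each contains at most $\alpha\binom{|G|}{2}$ crossings of lines of $G$; the trapezoid over a slab is the convex hull of the four points where $L_{p\pm\varepsilon|G|}$ meets the slab walls; and the quantitative relation $\sqrt{\alpha/2}\le\varepsilon$ guarantees that $L_p$ cannot escape the trapezoid. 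Only then does an endpoint comparison (against the straight trapezoid edges) become a valid certificate. The same lemma gives that at most $4\varepsilon|G|$ lines meet each trapezoid, so with $\varepsilon=1/16$ and $\alpha=1/128$ at most $|G|/2$ lines meet either trapezoid---this is the counting you assert but never prove.

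Two further points. Your device of pairing the ambiguous lines and eliminating one member per pair via a median step is a conflation with Megiddo-style prune-and-search; it is neither needed nor developed here, and the paper discards nothing by pairing---only lines missing both trapezoids are discarded. And your treatment of the nonlinearity of $f(a)=-1/a$ by ``bounded distortion'' does not work: metric distortion of $f$ says nothing about how many line crossings of $G$ land inside the image slab $S'=f(T')$, which is the quantity the band argument needs. The paper's fix is simpler and correct: apply the slab-subdivision lemma to $T$ and to $S$ separately and take a common refinement (or nestedly: subdivide $T$, pick $T_i$, subdivide $S_i=f(T_i)$, pick $S_{ij}$, and use $S_{ij}$ together with $f^{-1}(S_{ij})$), so that both slabs satisfy the crossing bound by construction.
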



\begin{proof}
We now list the steps required to compute the data and then explain each step in detail. 
\begin{enumerate}
	\item Divide simultaneously $T$ and $S$ into sub-intervals $T_1, ..., T_C$, $S_1, ..., S_C$, $S_i = f\left(T_i\right)$ with property that each sub-interval contains at most $\alpha \binom{|G|}{2}$ intersections of lines in $G$; The constant $\alpha$, on which $C = C\left(\alpha\right)$ depends, will be specified later. 
	\item Choose an index $i$ such that on the boundary of $T_i$ $N(a)$ takes values both greater and less than $m$, such an interval exists since $N$ takes values greater and less than $m$ on the boundary of the original interval. In the case of $T$ is unbounded on the right we compute $N\left({\pi}/{2}\right)$ as the value of $N$ on its right boundary. This is illustrated below.

\begin{figure}[h]
\centering
\includegraphics[scale=1]{./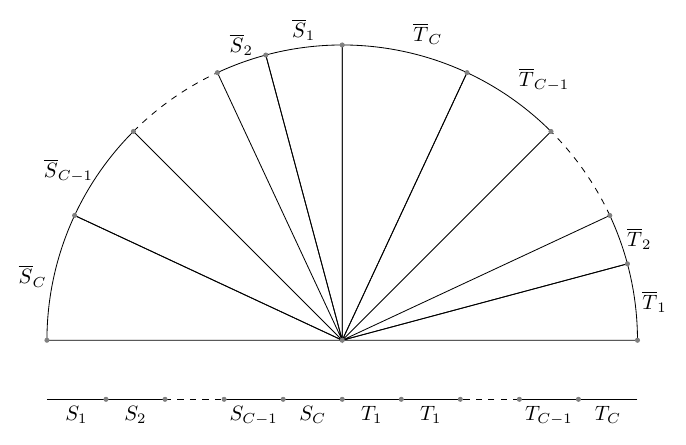}
\caption{Division of the real line into intervals and the corresponding division of the circle, given by $\overline{T}_i = \arctan \left(T_i\right)$ .}
\end{figure}

	\item Construct trapezoids $\tau$ and $\sigma$ around $L_p$ and $L_q$ within the vertical strips $T_i$ and $S_i$, such that at most half of the lines intersect at least one trapezoid. 
	\item Discard all lines that miss both trapezoids. There are four types of discarded lines based on their position relative to $\tau$ and $\sigma$. Denote by $k_{+/-, +/-}$ the number of lines go above/below $\tau$ and above/below $\sigma$.
$p^{'} = p - \left(k_{-, +} + k_{-, -}\right)$, $q^{'} = q - \left(k_{-, -} + k_{+, -}\right)$ and $m^{'} = m - k_{+, +}$.
\end{enumerate}

\begin{figure}[h]
\centering
\includegraphics[scale=0.25]{./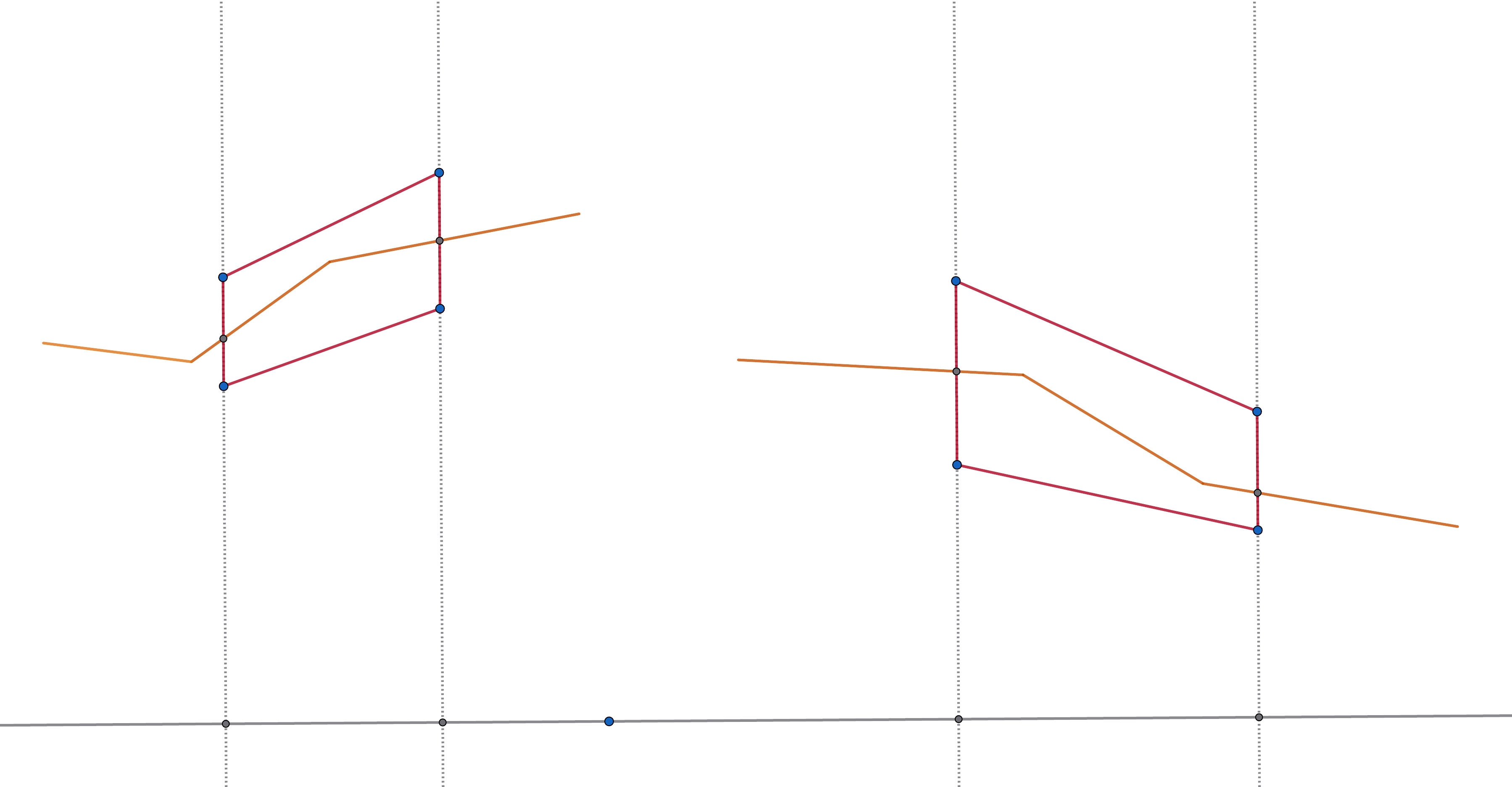}
\caption{The orange polygonal lines correspond to $L_q$ and $L_p$. Trapezoids $\sigma$ and $\tau$ are chosen to contain these levels.}
\end{figure}

We need two lemmas from \cite{LoMS}.
\begin{lemma}[Division of the interval]
Given a positive constant $\alpha < 1$, it is possible (in linear time) to divide interval $T$ into $C\left(\alpha\right)$ sub-intervals such that each $T_i$ contains at most $\alpha \binom{n}{2}$ line intersections within the $\left(T_i\right)_x$.
\end{lemma}

Step 1 of the algorithm is performed in linear time by applying this lemma independently to $T$ and $S$, and then refining the two partitions.
A more practical approach is to first divide $T$, choose one sub-interval $T_i$, then divide $S_i = f\left(T_i\right)$ and select one sub-interval $S_{ij}$. The new intervals will then be $S_{ij}$ and $f^{-1}\left(S_{ij}\right)$.

Step 2 consists simply of checking the value of $N$ at a constant number of points, each such computation is obviously done in linear time.

Next, to construct trapezoids, fix some $\varepsilon$, which will be specified later. Consider $L_p$, which is localized on some $\left(T_i\right)_x = \left[l, r\right]$. Let $D_{l/r}^{+/-}$ be the intersection of $L_{p \: +/- \: \varepsilon \left| G \right| }$ with $l/r$. The trapezoid is defined as the convex hull of these four points.

\begin{lemma}(Trapezoid properties)
	A trapezoid contains $L_p$ only if $\sqrt{{\alpha}/{2}} \leq \varepsilon$. Moreover, at most $4\varepsilon \left| G \right|$ lines intersect the trapezoid.
\end{lemma}
Since the total number of lines intersecting at least one trapezoid is bounded by $4\varepsilon \left| G \right| + 4\varepsilon \left| G \right|$, we pick $\varepsilon = {1}/{16}$, and then choose $\alpha = {1}/{128}$.
The time complexity of this step consists of computing the intersections of $L_{p \: +/- \: \varepsilon \left| G \right| }$ with $l$ and $r$, which is equivalent to finding the $p \: +/- \: \varepsilon \left| G \right| $ minimal value. This can be done in linear time using the median-of-medians algorithm.

For step 4, with the constants chosen above, we ensure that each trapezoid contains corresponding level and that at least half of the lines are discarded at each iteration.

Next, to determine the type of each line in $G$, it suffices to compute its intersections with the vertical strips $T$ and $S$ and perform a constant number of comparisons. This step is also completed in linear time.

\end{proof}

\section{Proof of Theorem 2}
\begin{proof} Here we present (non-optimal greedy) polynomial time algorithms: namely, consider all possible partitions and check which of them is valid.

\medskip

\noindent {\bf Theorem A.} Let us choose $d$ points to define the first hyperplane as their linear span. The second hyperplane is then defined by $d - 1$ points (one degree of freedom less, since it must be orthogonal to the first hyperplane), the third hyperplane by $d - 2$ points, and so on. This leads to $O\left(n^{d + \left(d-1\right) + ... + 2}\right)$ variants, each of which is tested in linear time. Hence the overall time complexity is $O\left(n^{d + \left(d-1\right) + ... + 2 + 1}\right) = O\left(n^{\frac{d\left(d+1\right)}{2}}\right)$. 

\medskip

\noindent {\bf Theorem B.}  Let $n=n_1+...n_m$, where $n_i=|X_i|$. Without loss of generality we may assume that $n_1\le n_2\le...\le n_m$. Then $n_1\le n/m$.  We choose $d$ points from $X_1$ to define the first hyperplane $h_1$ as their linear span. The second hyperplane $h_2$ is defined by some $d - 1$ points and is orthogonal to $h_1$. This gives  $O\left(n_1^{d + \left(d-1\right)}\right)$ variants.  Since each pair $(h_1,h_2)$ must be checked for all points, the overall time complexity is $O(n\,n_1^{2d-1}) \le O(n^{2d})$.  
\end{proof} 

\medskip

\noindent{\bf Remark.} A natural question is how much one can improve over this naive approach. Consider similar problem of finding a Ham-Sandwich cut in $\mathbb{R}^d$. In \cite{LoMS}, it was shown that this problem could be solved in time $O\left(n^{d-1}\right)$ while the naive approach has time complexity $O\left(n^{d+1}\right)$. It is therefore natural to expect that, for Theorem A, there exists an algorithm with time complexity $O\left(n^{\frac{d\left(d+1\right)}{2} - 2}\right)$.

\bigskip

\noindent {\bf {Acknowledgments.}}
Alexey Fakhrutdinov is supported by the ``Priority 2030'' strategic academic leadership program. The author thanks the Summer Research Programme at MIPT - LIPS-25 for the opportunity to work on this and other problems.

\bigskip

 \medskip

 A. A. Fakhrutdinov, MIPT Centre of Pure Mathematics.

 {\it E-mail address:} fakhrutdinov.aa@phystech.edu

 \medskip

 O. R. Musin,  University of Texas Rio Grande Valley, School of Mathematical and
 Statistical Sciences, One West University Boulevard, Brownsville, TX, 78520, USA.

 {\it E-mail address:} oleg.musin@utrgv.edu

\end{document}